\documentclass[12pt]{amsart}

\usepackage{latexsym,amsmath,amssymb,amsthm,amsfonts}

\usepackage[numeric]{amsrefs}

\usepackage[hidelinks]{hyperref}

\usepackage[capitalise]{cleveref}

\newcommand{\R}{{\mathbb{R}}}
\newcommand{\Z}{{\mathbb{Z}}}

\newcommand{\eps}{\varepsilon}
\newcommand{\N}{{\mathbb{N}}}
\newcommand{\E}{{\mathbb{E}}}

\newcommand{\diam}{{\rm diam}\,}

\renewcommand{\phi}{{{\varphi}}}

\newtheorem{theorem}{Theorem}
\newtheorem{lemma}[theorem]{Lemma}
\newtheorem{proposition}[theorem]{Proposition}
\newtheorem*{proposition*}{Proposition}

\newtheorem{corollary}[theorem]{Corollary}

\theoremstyle{remark}

\newtheorem{openproblem}{Open problem}

\title{On Gr{\"u}nbaum's problem for symmetric configurations}

\author{A. Arman}
 \address{Department of Mathematics, University of Manitoba, Winnipeg, MB, R3T 2N2, Canada}
 \email{andrew0arman@gmail.com}
 
\author{A.\ Bondarenko}
 \address{Department of Mathematical Sciences, Norwegian University of Science and
 	Technology, NO-7491 Trondheim, Norway}
 \email{andriybond@gmail.com}
 \thanks{The second author was supported in part by Grant 334466 of the Research Council of Norway.}

\author{A.\ Prymak}
 \address{Department of Mathematics, University of Manitoba, Winnipeg, MB, R3T 2N2, Canada}
 \email{prymak@gmail.com}
 \thanks{The third author was supported by NSERC of Canada Discovery Grant RGPIN-2026-06488.}

\author{D.\ Radchenko}
 \address{Institut des Hautes \'Etudes Scientifiques, CNRS, Laboratoire Alexander Grothendieck, 35 route de Chartres, Bures-sur-Yvette 91440, France}
 \email{danradchenko@gmail.com}
\thanks{The fourth author was supported by ERC Starting Grant No. 101078782.}

\date{\today}
\subjclass[2020]{Primary 52C17; Secondary 52A20, 52C35, 94A24.}
\keywords{Gr{\"u}nbaum's covering problem, equal-diameter covering, rate-distortion, hypersimplices, symmetric configurations.}

\begin{document}

\begin{abstract}
Let $g_n$ be the largest number of Euclidean balls of diameter $1$ which may be needed to cover a set of diameter $1$ in $\R^n$.  We study this problem for finite sets invariant under all coordinate permutations.  We prove that the exponential growth rate in this symmetric problem can be characterized exactly as a finite-alphabet squared-error rate-distortion supremum $\alpha_0$.

Specialized to the two-point case, i.e., for subsets of Boolean cubes, this gives the explicit lower bound
	\[g_n\ge (1.160235457\ldots-o(1))^n,\]
improving the previous best bound $(2/\sqrt3-o(1))^n$.  Using Fix's Gaussian characterization of the rate-distortion problem, we give a numerical three-point construction with exponent base greater than $1.160497831$.  Finally, we show that $\alpha_0$ is not attained by any finitely supported distribution.
\end{abstract}

\maketitle

\section{Introduction}
Let $\R^n$ be the $n$-dimensional Euclidean space.  For a bounded set $A\subset\R^n$, let $N_{\rm cov}(A,r)$ be the minimum number of Euclidean balls of radius $r$ whose union contains $A$.  Gr\"unbaum's equal-diameter covering problem asks for the asymptotic behaviour of
\[
 g_n:=\sup_{\diam A=1} N_{\rm cov}(A,1/2),
\]
where the supremum is over bounded sets $A\subset\R^n$; see, for example, Gr\"unbaum~\cite{G} and Bourgain--Lindenstrauss~\cite{BL}.  Equivalently, $g_n$ is the largest number of balls of diameter $1$ which may be needed to cover a set of diameter $1$.

This problem is related to Borsuk's problem, with the important difference that the covering pieces are prescribed to be balls of the same diameter as the original set.  Bourgain and Lindenstrauss~\cite{BL} proved that every set of diameter $1$ in $\R^n$ can be covered by $(\sqrt{3/2}+o(1))^n$ balls of diameter $1$.  They also constructed sets requiring exponentially many such balls~\cite{BL}*{Eqn.~(8)}.  The lower bound was later improved in~\cite{ABP}, where finite sets of diameter $1$ were constructed which cannot be covered by $(2/\sqrt3-o(1))^n$ balls of diameter $1$.  The latter construction starts from a discrete random spherical set and then applies a removal method.

Let $S_n$ denote the symmetric group, acting on $\R^n$ by permutations of coordinates.  We define the symmetric Gr\"unbaum constant by
\[
g_n^{\rm sym}:=\sup\left\{N_{\rm cov}(A,\diam(A)/2):\ A\subset\R^n\text{ finite},\ \diam(A)>0,\ \sigma A=A\text{ for every }\sigma\in S_n\right\}.
\]
The definition is scale invariant and is equivalent, after rescaling, to the diameter-one version with $N_{\rm cov}(A,1/2)$.  In particular, $g_n^{\rm sym}\le g_n$.

Put
\[
H(p)=-p\log p-(1-p)\log(1-p), \qquad 0\le p\le 1,
\]
with the convention $0\log0=0$, and define
\begin{equation}\label{eq:alpha-bin}
 \alpha_{\rm bin}:=
 \max_{0<a<1/2}
 \exp\left[H(a)-H\left(\frac{1-\sqrt{1-2a}}{2}\right)\right].
\end{equation}
Our first result is the following explicit bound.

\begin{theorem}\label{thm:main}
For the symmetric Gr\"unbaum constants, we have
	\[\liminf_{n\to\infty}\bigl(g_n^{\rm sym}\bigr)^{1/n}\ge \alpha_{\rm bin}.\]
Consequently,
	\[\liminf_{n\to\infty}g_n^{1/n}\ge \alpha_{\rm bin}.\]
Moreover,
	\[ \alpha_{\rm bin}=1.160235457\ldots .\]
\end{theorem}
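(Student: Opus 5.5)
The plan is to take $A$ to be a single $S_n$-orbit in the Boolean cube: the set of all $x\in\{0,1\}^n$ of Hamming weight $k=\lfloor an\rfloor$, where $0<a<1/2$ is fixed. This set is finite and permutation invariant.

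First compute the diameter. For two weight-$k$ vectors, $|x-y|^2=2(k-|x\wedge y|)$. With $k\le n/2$ the overlap can be zero, so $\diam(A)^2=2k$. A ball of radius $\diam(A)/2$ therefore has squared radius $k/2$.

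Next, bound how many points of $A$ one such ball can contain. Fix a centre $c\in\R^n$ and suppose $|x-c|^2\le k/2$. I would first reduce to centres that are permutation invariant on the support pattern, then to a two-level structure, using convexity and averaging. The cleaner route is a direct estimate: write $|x-c|^2=k-2\langle x,c\rangle+|c|^2$. The condition becomes $\langle x,c\rangle\ge (k/2+|c|^2)/2$, so each point of $A\cap B$ has a large inner product with $c$. Then I would optimize over $c$. By a symmetrization/rearrangement argument (sorting the coordinates of $c$ and using log-concavity of the counting), the extremal $c$ should be $c=t\,\mathbf 1_S$ for a set $S$ of size $m$. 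The points of $A$ inside the ball are then those $x$ with $|x\cap S|\ge j$ for an appropriate threshold $j$.

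In the large-deviation scaling, the count of such $x$ is $\binom{n}{k}\exp(-n\,I)$, and the best choice gives the volume fraction $\exp[n(H(b)-H(a))+o(n)]$ with $b=(1-\sqrt{1-2a})/2$. Heuristically this is the binary rate-distortion computation: the best code for weight-$a$ sources at squared distortion $a/2$ uses reconstruction points $c=t\mathbf 1_S$. Optimizing the quadratic constraint produces $b(1-b)=a/4\cdot$const, that is $b=(1-\sqrt{1-2a})/2$. Equivalently, $b$ solves $2b-2b^2=a$... I would verify this identity: $(1-\sqrt{1-2a})/2$ is the root of $b^2-b+a/2=0$, i.e. $b(1-b)=a/2$, which should match the optimizing Lagrange condition.

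Granting this, $N_{\rm cov}\ge |A|/\max_c|A\cap B_c|\ge \exp[n(H(a)-H(b))-o(n)]$. Taking the $n$-th root and maximizing over $a$ gives the first claim. The bound for $g_n$ follows because $g_n^{\rm sym}\le g_n$.

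The numerical value of $\alpha_{\rm bin}$ comes from a one-variable maximization. I would set the derivative $\log\frac{1-a}{a}=b'(a)\log\frac{1-b}{b}$, with $b'=(1-2a)^{-1/2}/2$, solve numerically, and include interval arithmetic to certify the digits.

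The main obstacle is the uniform upper bound on $|A\cap B_c|$ over all centres $c\in\R^n$, not just symmetric ones. I would handle it with a Chernoff/exponential-moment bound: for any $\lambda>0$,
\[|A\cap B_c|\le \sum_{x\in A}e^{\lambda(\langle x,c\rangle-\theta)}.\]
The right-hand side is a product-like generating function of the $c_i$. I would then show by concavity in the $c_i$ that it is maximized, for fixed $|c|$, when the $c_i$ take at most two values. This reduces the problem to the two-level computation above.
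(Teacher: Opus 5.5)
There is a genuine gap, in the step you yourself flag. Your choice of set, the diameter $\sqrt{2k}$, the reduction $N_{\rm cov}\ge|A|/\max_c|A\cap B_c|$ and the critical-point equation are all fine. But the core of the proof is the uniform bound $\max_{c\in\R^n}|A\cap B_c|\le\exp(nH(u)+o(n))$ with $u=(1-\sqrt{1-2a})/2$, and you never establish it. The structure you propose for it is also false.

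Centres of the form $c=t\mathbf 1_S$ are not extremal. The radius is fixed at $\sqrt{k/2}$, so the offset of the centre along $\mathbf 1$ matters. By Pythagoras one may project $c$ onto the hyperplane $\sum_i c_i=k$ containing $A$, and $t\mathbf 1_S$ lies there only if $t|S|=k$.

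The extremal configuration is the centre $u\mathbf 1+(1-2u)\mathbf 1_S$ with $|S|=un$. It captures the points with ones-density $1-u$ on $S$ and $u$ off $S$. These lie at squared distance $u(1-u)n\approx k/2$ and number $\exp(nH(u)+o(n))$. In rate-distortion terms, the optimal reconstruction alphabet is $\{u,1-u\}$, not $\{0,t\}$.

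For $c=t\mathbf 1_S$ with $|S|=m$, your condition reads $|x\cap S|\ge k/(4t)+tm/2\ge\sqrt{km/2}$. With $m=un$, the points above have $|x\cap S|=u(1-u)n<u\sqrt{1-u}\,n=\sqrt{km/2}$, so no such ball catches them. Optimizing your threshold count over $m$ and $t$ at $a=a^\ast$ gives an exponent of only about $0.347$, versus $H(u(a^\ast))\approx0.369$. So ``the two-level computation above'' would assert $\max_c|A\cap B_c|\le e^{(0.347+o(1))n}$, which is false, and would yield a wrong covering exponent near $1.186$. The relation $b(1-b)=a/2$ you write down is the correct answer, but you assert it (``Granting this''); your mechanism does not produce it.

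The Chernoff fallback does not close the gap. Your bound equals $e^{\lambda k/4}\sum_{x\in A}e^{-\lambda|x-c|^2/2}$, and $\log\sum_{x\in A}e^{\lambda\langle x,c\rangle}$ is a log-sum-exp, hence convex in $c$. There is no concavity in the $c_i$ forcing a two-valued maximizer, and evaluating $\max_c$ of this Gaussian sum is exactly the original difficulty. Any correct two-value reduction would also still have to optimize both levels, not just $\{0,t\}$.

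The missing idea, which is how the paper proceeds, is to avoid identifying the extremal centre at all:
\begin{itemize}
\item Project the centre into $[0,1]^n$ and round it to $m+1$ levels, at a cost of $\sqrt n/m$ in the radius.
\item On a block where the centre is constant and the ones-density is $r$, the best centre value is $r$ and the normalized squared distance is $r(1-r)$. Type counting then gives the exponent $\sup\{\int H\,d\mu:\ \int r\,d\mu=a,\ \int r(1-r)\,d\mu\le a/2\}$.
\item Write $H(r)=G(r(1-r))$ with $G(s)=H\bigl((1-\sqrt{1-4s})/2\bigr)$, which is increasing and concave. Jensen gives $\int H\,d\mu\le G(a/2)=H(u)$, with equality for $\mu=p\delta_u+(1-p)\delta_{1-u}$.
\end{itemize}
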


The construction in Theorem~\ref{thm:main} is given by the constant-weight layer of the Boolean cube
	\[M_{n,a}=\left\{x\in\{0,1\}^n:\ \sum_{j=1}^n x_j=\lfloor an\rfloor\right\},
	 \qquad 0<a<1/2.\]
Equivalently, for $k=\lfloor an\rfloor$, this is the vertex set of the hypersimplex
	\[\Delta(k,n)=\left\{x\in[0,1]^n:\ \sum_{j=1}^n x_j=k\right\}.\]
The proof in Section~\ref{sec:hypersimplex} shows that for $a=a^\ast=0.212856445\ldots,$ for which the maximum in~\eqref{eq:alpha-bin} is attained, the sets $X_n=M_{n,a}$ satisfy $N_{\mathrm{cov}}(X_n,\diam(X_n)/2) = \alpha_{\mathrm{bin}}^{n+o(n)}$.

The hypersimplex construction uses only two coordinate values.  More generally, one may fix a finite real alphabet $\{x_1,\ldots,x_m\}$ and take all permutations of a vector containing prescribed proportions of the values $x_i$.  This leads to a finite alphabet rate-distortion formulation.  For a finitely supported real random variable $X$, let
	\[D(X)=\sup_{X_1,X_2}\mathbb E|X_1-X_2|^2,\]
where the supremum is over all couplings $(X_1,X_2)$ with $X_1\sim X_2\sim X$.  Let
	\[R_X(\Delta):=\inf_{Y:\ \mathbb E|X-Y|^2\le \Delta} I(X;Y)\]
be the squared-error rate-distortion function, where the infimum is over finitely supported real reconstruction variables $Y$ coupled with $X$.  Define
\begin{equation}\label{eqn:alpha0}
\alpha(X):=R_X(D(X)/4),
\qquad
\alpha_0:=\sup_X\alpha(X),
\end{equation}
where the supremum is over all finitely supported real random variables $X$.

\begin{theorem}\label{thm:rd-symmetric}
With $\alpha_0$ as in~\eqref{eqn:alpha0},
\[
\lim_{n\to\infty}\frac1n\log g_n^{\rm sym}=\alpha_0.
\]
Moreover, for every finitely supported non-constant real random variable $X$ there exist finite $S_n$-invariant sets $T_n\subset\R^n$ such that
\[
N_{\rm cov}(T_n,\diam(T_n)/2)\ge \exp\left(n(\alpha(X)+o(1))\right).
\]
\end{theorem}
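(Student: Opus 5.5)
We split the statement into a lower bound (the construction $T_n$, which also gives $\liminf\frac1n\log g_n^{\rm sym}\ge\alpha_0$) and a matching upper bound $\limsup\frac1n\log g_n^{\rm sym}\le\alpha_0$.

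\emph{Lower bound.} Fix a finitely supported non-constant $X$ with atoms $x_1,\dots,x_m$ and probabilities $p_1,\dots,p_m$. Choose integer counts $k_i(n)$ with $k_i/n\to p_i$, and let $T_n$ be the $S_n$-orbit of a vector $v$ having $k_i$ coordinates equal to $x_i$. For $u,w\in T_n$ the pair $(u_j,w_j)$, with $j$ uniform, is a coupling of the empirical distribution with itself. Hence $\diam(T_n)^2/n$ equals the maximum of $\E|X_1-X_2|^2$ over couplings of the type. This maximum is a finite linear program, so it tends to $D(X)$.

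A ball of radius $r=\diam(T_n)/2$ around any center $c\in\R^n$ covers only points $u\in T_n$ with $\frac1n\|u-c\|^2\le D(X)/4+o(1)$. We may first quantize $c$: project its coordinates to a bounded interval and round them to a grid of mesh $\delta$. This costs $O(\delta)$ in normalized distortion and leaves $(1/\delta)^{O(1)}$ possible letters. We then bound the number of $u$ in a type class within normalized distance $\Delta$ of a fixed $c$. By the method of types this is at most $\exp\bigl(n\max H(U\mid C)+o(n)\bigr)$, the maximum being over joint types with $U$-marginal $p$ and $\E|U-C|^2\le\Delta$. Since $|T_n|=\exp(nH(X)+o(n))$, the number of balls needed is at least
\[
\exp\Bigl(n\bigl(H(X)-\max H(X\mid C)\bigr)-o(n)\Bigr)=\exp\bigl(n\,R_X(D(X)/4+O(\delta))-o(n)\bigr).
\]
Continuity of $R_X$ at $D(X)/4$ lets $\delta\to0$, which gives $\alpha(X)+o(1)$. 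That continuity holds because $R_X$ is convex and finite on $(0,\infty)$, and $D(X)/4>0$. This step also needs $R_X(\Delta)$ over finitely supported $Y$ to agree with the quantized infimum, which follows by the same rounding argument. Taking the supremum over $X$ gives the $\liminf$.

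\emph{Upper bound.} Let $A$ be finite and $S_n$-invariant. The key point is that $A$ is a union of at most $\mathrm{poly}(n)$ orbits only after quantization, and this is where I expect the main obstacle: the alphabet of $A$ is arbitrary and depends on $n$. Rescale so that $\diam A=\sqrt n$. My first attempt is as follows. By $S_n$-invariance, $A$ contains all permutations of each of its points, so the diameter of each orbit $O(u)\subset A$ is at most $\sqrt n$. Translating along the all-ones direction does not change covering numbers, so we may center the coordinates. The orbit diameter bound then forces $\frac1n\sum u_j^2\lesssim 1$. Indeed, the anti-sorted coupling gives $\|u-\sigma u\|^2\ge c\sum u_j^2$ for a suitable $\sigma$, after centering, possibly using a median argument.

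Next, truncate the large coordinates (only $o(n)$ of them, by the $\ell_2$ bound). Quantize each remaining coordinate to a grid of mesh $\delta$ on $[-L,L]$. This changes distances by $o(\sqrt n)$ in a way we compensate with slightly larger balls. Finally we must show that enlarging radius $\frac{\sqrt n}{2}$ to $\frac{\sqrt n}{2}(1+\eta)$ affects the count only by $e^{o(n)}$ after optimizing; here I would use continuity of $R$ again.

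Covering each orbit (type class $P$) by the random-coding / type-covering lemma uses $\exp\bigl(n R_P(\Delta)+o(n)\bigr)$ balls, with $\Delta=D(P)/4\cdot(1+\eta)$. Note that $D(P)\le\diam(A)^2/n$ holds for the worst pair inside the orbit. The total over the polynomially many types is then at most $\exp(n(\alpha_0+\eta'+o(1)))$. A subtlety is that the quantization alters $D(P)$ and $R_P$ jointly. I would handle this by approximating the uniform-continuity modulus of $R$ in $(P,\Delta)$ on a compact set of distributions on the grid, which is where the truncation constant $L$ must be chosen carefully.
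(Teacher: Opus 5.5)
Your lower bound is fine. It differs from the paper only in technique: you quantize the center and count joint types over a fixed grid, while the paper handles an arbitrary center $y$ directly via $\log|S|=H(U)\le\sum_j H(U_j)=nH(U_J\mid J)\le nH(U_J\mid y_J)$, using that $y_J$ is already finitely supported. Only the trivial inequality $\inf_{\text{grid } C}I(X;C)\ge R_X$ is needed there, not equality.

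The upper bound has a genuine gap at the truncation step. The bound $\frac1n\|u\|_2^2\le C$ only tells you that at most $Cn/L^2$ coordinates exceed $L$. These coordinates can carry a constant fraction of the norm, so clipping them to $\pm L$ can move a point by $\Theta(\sqrt n)$, not $o(\sqrt n)$. For example, $u=\frac{\sqrt n}{2}e_1$ has orbit diameter $\sqrt{n/2}<\sqrt n$, yet clipping moves it by $\frac{\sqrt n}{2}-L$, essentially the whole covering radius. No ``slightly larger balls'' can absorb that. The large values have to be kept and quantized only to relative precision; the paper rounds $|u_j|$ down to the nearest $\frac{\eps}{2\sqrt n}(1+\eps/2)^k$, and a uniform $\delta$-grid on $[-O(\sqrt n),O(\sqrt n)]$ would also do. The number of orbits is then still $e^{o(n)}$. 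However, the alphabet grows with $n$, so your plan of using a uniform continuity modulus of $R$ over a compact set of distributions on a fixed grid is no longer available.

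What must replace it is a quantitative estimate, uniform over the quantized types, at distortion \emph{below} $D(P)/4$. After quantization an orbit can have $D(P)$ as large as $(1+2\eps)^2$. You must nevertheless cover it by balls of radius $\rho\sqrt n$ with $\rho=\frac12-2\eps$, so that adding back the perturbation keeps the radius at most $\frac{\sqrt n}{2}$. Your choice $\Delta=D(P)(1+\eta)/4$ goes the wrong way: the bound $R_P(\Delta)\le\alpha(P)$ is then trivial, but the balls are too big. The paper obtains the needed estimate by mixing a near-optimal $Y$ at distortion $\Delta_0=D(X_q)/4$ with $Y'=X_q$ with probability $\lambda=1-\rho^2/\Delta_0=O(\eps)$. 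Data processing then gives $R_{X_q}(\rho^2)\le\alpha(X_q)+\lambda H(X_q)$. The geometric-progression support together with $\mathbb E X_q^2\le1$ yields $H(X_q)=O(\log(1/\eps))$ uniformly in $n$ and $q$. That uniform entropy bound is the missing ingredient in your sketch.

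A smaller point: you cannot center every point of $A$ at once, since different orbits have different coordinate means. Translating the barycenter of $A$ to the origin gives $A\subset B(0,\diam A)$, which is the clean way to get the $\ell_2$ bound.
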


\begin{corollary}\label{cor:rd-lower}
For the unrestricted Gr\"unbaum constants,
\[
\liminf_{n\to\infty}\frac1n\log g_n\ge \alpha_0.
\]
\end{corollary}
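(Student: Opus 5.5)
The corollary follows directly from Theorem~\ref{thm:rd-symmetric} together with the trivial comparison $g_n^{\rm sym}\le g_n$ noted after the definition of $g_n^{\rm sym}$. I would argue as follows.

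Fix a finite $S_n$-invariant set $A\subset\R^n$ with $\diam(A)>0$. Rescale it to $A'=A/\diam(A)$, so that $\diam(A')=1$. Scaling commutes with coverings by balls, hence
\[
N_{\rm cov}(A,\diam(A)/2)=N_{\rm cov}(A',1/2)\le g_n.
\]
Taking the supremum over all such $A$ gives $g_n^{\rm sym}\le g_n$ for every $n$.

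Next, take logarithms, divide by $n$ and pass to the $\liminf$:
\[
\liminf_{n\to\infty}\frac1n\log g_n\ \ge\ \liminf_{n\to\infty}\frac1n\log g_n^{\rm sym}\ =\ \lim_{n\to\infty}\frac1n\log g_n^{\rm sym}\ =\ \alpha_0,
\]
where the last equality is the first statement of Theorem~\ref{thm:rd-symmetric}.

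If $\alpha_0$ were allowed to be $+\infty$, the same chain still gives the claim, interpreted in the extended reals. In fact the Bourgain--Lindenstrauss upper bound $g_n\le(\sqrt{3/2}+o(1))^n$ shows that $\alpha_0\le\log\sqrt{3/2}<\infty$.

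Alternatively, one can avoid the upper-bound half of Theorem~\ref{thm:rd-symmetric} and use only its second statement. For each finitely supported non-constant $X$ it provides finite sets $T_n$ with $N_{\rm cov}(T_n,\diam(T_n)/2)\ge e^{n(\alpha(X)+o(1))}$. By the rescaling above this gives $g_n\ge e^{n(\alpha(X)+o(1))}$, so $\liminf_{n\to\infty}\frac1n\log g_n\ge\alpha(X)$. A constant $X$ has $D(X)=0$ and $\alpha(X)=R_X(0)=0$, so it contributes nothing beyond the trivial bound. Taking the supremum over all $X$ then gives $\alpha_0$.

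There is no real obstacle here. The only point needing care is that the definition of $g_n^{\rm sym}$ is scale invariant, so the rescaled sets are admissible in the definition of $g_n$.
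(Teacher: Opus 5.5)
Your proposal is correct and matches the paper's proof, which simply combines Theorem~\ref{thm:rd-symmetric} with the comparison $g_n\ge g_n^{\rm sym}$. Your alternative remark is also valid: only the lower-bound half of Theorem~\ref{thm:rd-symmetric} (the sets $T_n$) is actually needed, followed by taking the supremum over $X$.
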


For two-point distributions, Theorem~\ref{thm:rd-symmetric} recovers the hypersimplex exponent in Theorem~\ref{thm:main}.  Using Fix's characterization of the squared-error rate-distortion problem~\cite{Fix}, in Section~\ref{sec:fix} we give numerically a three-point distribution with
\[
 \exp(\alpha(X))>1.160497831351.
\]
Consequently,
\[
\lim_{n\to\infty}\bigl(g_n^{\rm sym}\bigr)^{1/n}
=e^{\alpha_0}>1.160497831351,
\]
and the same lower bound holds for $\liminf_{n\to\infty}g_n^{1/n}$.  This slightly improves the hypersimplex value. Our numerical computations with larger finite alphabets suggest that any further improvement will be extremely small, but the exact distributional optimum is unclear.

\begin{openproblem}\label{prob:alpha0}
Determine $\alpha_0$ and describe the behaviour of extremizing sequences.
\end{openproblem}

Although the value of $\alpha_0$ is unknown, the supremum in~\eqref{eqn:alpha0} is not attained on a finite alphabet.

\begin{proposition}\label{prop:no-finite-maximizer}
For every finitely supported non-constant real random variable $X$, there exists a finitely supported real random variable $X'$ with $\alpha(X')>\alpha(X)$.  Consequently, $\alpha_0$ is not attained by any finitely supported distribution.
\end{proposition}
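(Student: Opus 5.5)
The plan is a first-order perturbation argument: add a tiny atom far from the support of $X$. For $\varepsilon\in(0,1)$ and a point $x_0$ to be chosen, set $X_\varepsilon:=(1-\varepsilon)X+\varepsilon\delta_{x_0}$, meaning $X$ with probability $1-\varepsilon$ and $x_0$ with probability $\varepsilon$. I will show that $\frac{d}{d\varepsilon}\alpha(X_\varepsilon)\big|_{\varepsilon=0^+}>0$ once $x_0$ is large enough. Then $X'=X_\varepsilon$ with $\varepsilon$ small works, and non-attainment of $\alpha_0$ follows at once.

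\emph{Step 1: the distortion side.} For real variables, $\mathbb E|X_1-X_2|^2=2\mathbb E X^2-2\mathbb E X_1X_2$. This is maximized by the antitone coupling $X_2=F^{-1}(1-F(X_1))$. Take $x_0>\max\operatorname{supp}X$. Then in the antitone coupling of $X_\varepsilon$ the new mass $\varepsilon$ at $x_0$ is paired with the bottom $\varepsilon$-quantile of $X$, which for small $\varepsilon$ sits at $x_{\min}=\min\operatorname{supp}X$. The remaining mass is the antitone coupling of $X$ with an $O(\varepsilon)$ correction independent of $x_0$. Hence
\[
D(X_\varepsilon)=D(X)+2\varepsilon(x_0-x_{\min})^2+O(\varepsilon),
\]
where the implied constant does not depend on $x_0$. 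So the threshold $\Delta_\varepsilon=D(X_\varepsilon)/4$ grows at rate $\tfrac12(x_0-x_{\min})^2+O(1)$.

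\emph{Step 2: the rate side.} I will use the standard dual (Blahut/Csisz\'ar) representation of $R_X(\Delta)$ at a point where the slope $-s<0$ is attained, together with an optimal output law $q$ on finitely many points $y_j$; the latter is finite by Fix's result~\cite{Fix}. The envelope theorem then gives
\[
\frac{d}{d\varepsilon}R_{X_\varepsilon}(\Delta_\varepsilon)\Big|_{0}
=\Bigl[-\log\sum_j q(y_j)e^{-s(x_0-y_j)^2}\Bigr]-\mathbb E\Bigl[-\log\sum_j q(y_j)e^{-s(X-y_j)^2}\Bigr]-s\,\frac{d\Delta_\varepsilon}{d\varepsilon}\Big|_0.
\]
Rigorously, I only need a lower bound. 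Plugging the fixed dual variables (with $\lambda(x_0)$ chosen to saturate the constraint, after rescaling by $1-O(\varepsilon)$) into the dual for $X_\varepsilon$ gives $\alpha(X_\varepsilon)\ge\alpha(X)+\varepsilon\,g(x_0)+o_{x_0}(\varepsilon)$. Here $g(x_0)$ is the right-hand side above.

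\emph{Step 3: growth comparison.} As $x_0\to\infty$,
\[
-\log\sum_j q(y_j)e^{-s(x_0-y_j)^2}=s(x_0-y_{\max})^2+O(1),
\]
while by Step 1 the penalty is $s\cdot\tfrac12(x_0-x_{\min})^2+O(1)$. Hence $g(x_0)\ge \tfrac s2 x_0^2-O(x_0)\to+\infty$. This needs $s>0$. That holds because $X$ is non-constant and $D(X)/4<\operatorname{Var}X\le D(X)/2$ (the lower bound $\operatorname{Var}X\ge D(X)/4$ comes from the independent coupling, and it is strict unless $X$ is constant), so $R_X$ is strictly decreasing at $D(X)/4$. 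Fixing such an $x_0$ and then $\varepsilon$ small gives $\alpha(X_\varepsilon)>\alpha(X)$.

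The main obstacle is Step 2: making the envelope/dual perturbation rigorous. The infimum in $R_X$ is over finitely supported $Y$, so I must confirm that it agrees with the usual rate-distortion function; Fix's finiteness handles this. I must also control the rescaling of the dual constraint $\sum_x p(x)\lambda(x)e^{-s(x-y)^2}\le1$ uniformly in $y$ after adding the new atom. I would first try the crude feasibility route: keep $s$ and $\lambda$ on the old atoms, set $\lambda(x_0)$ so that its contribution is at most $1$ everywhere, and multiply all $\lambda$ by $(1+\varepsilon)^{-1}$. This yields the lower bound with an $x_0$-dependent but harmless error, which suffices because only positivity for one fixed $x_0$ is needed.
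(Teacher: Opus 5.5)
Your plan is the paper's plan. You add a tiny atom far to the right, reuse Fix's $s$ and the old weights as a certificate in \eqref{eq:rd-dual-lower}, and compare a gain of order $s x_0^2$ with a diameter cost of order $\frac s2 x_0^2$. Step~1 is fine; only the upper bound $D(X_\varepsilon)\le D(X)+2\varepsilon(x_0-x_{\min})^2$ is actually needed. The growth comparison in Step~3 is also fine. Two steps, however, fail as written.

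\textbf{The non-trivial range.} The claim that $D(X)/4<\operatorname{Var}X$ for every non-constant $X$ is false. The independent coupling gives the other inequality, $\operatorname{Var}X\le D(X)/2$. The bound $D(X)\le 4\operatorname{Var}X$ comes from $\mathbb E|X_1-X_2|^2\le 2\mathbb E(X_1-\mathbb EX)^2+2\mathbb E(X_2-\mathbb EX)^2$, with equality iff $X$ is symmetric about its mean. For $X$ uniform on $\{0,1\}$ one has $D(X)=1$ and $D(X)/4=\operatorname{Var}X$, so $\alpha(X)=0$. Fix's data, stated only for $0<\Delta<\operatorname{Var}X$, then cannot be invoked. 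The paper handles this case separately: if $\alpha(X)=0$, the two-point variable from Section~\ref{sec:info} with $\alpha(X')=\log\alpha_{\rm bin}>0$ already works. Once $\alpha(X)>0$, you are automatically in the non-trivial range.

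\textbf{Dual feasibility in Step~2.} This is the more serious gap: your mechanism does not give the first-order bound.
\begin{itemize}
\item Rescaling all weights by $1-c\varepsilon$ costs about $c\varepsilon$ in the objective. That is the same order as the gain $\varepsilon g(x_0)$, not an $o(\varepsilon)$ error.
\item With the saturating choice $\lambda(x_0)=1/\sum_j q(y_j)e^{-s(x_0-y_j)^2}$, the constraint at the contact point $y_{\max}$ equals $1-\varepsilon+\varepsilon\lambda(x_0)e^{-s(x_0-y_{\max})^2}$. For large $x_0$ this is about $1+\varepsilon\bigl(1/q(y_{\max})-1\bigr)$, which already violates the constraint at first order, since $|\mathcal Y|\ge2$ in the non-trivial range.
\item If you require $\lambda(x_0)e^{-s(x_0-y)^2}\le1$ for all $y$, then $y=x_0$ forces $\lambda(x_0)\le1$. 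This kills the gain $\varepsilon\log\lambda(x_0)$ entirely.
\item If instead you bound the new term by its supremum $\varepsilon\lambda(x_0)$ and rescale, you lose a first-order term of size about $\varepsilon\lambda(x_0)$, which is exponentially large in $x_0^2$.
\end{itemize}
An $x_0$-dependent first-order loss is harmless only if its coefficient is $o(x_0^2)$; merely fixing $x_0$ does not help.

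\textbf{The missing idea: a localized check.} The new term is large only near $x_0$, where $C$ is small. Near the contact set $\mathcal Y\subset(-\infty,b]$ the slack is only of order $\varepsilon$, so the new term must be $o(\varepsilon)$ there. The paper arranges this in two moves:
\begin{itemize}
\item It takes the new weight slightly below the saturating value, $a_z=e^{s(z-b-1)^2}$.
\item It ties the mass to that weight, $\varepsilon_z=1/(2a_z)$.
\end{itemize}
Then the new term is at most $1/2$ everywhere and is $o(\varepsilon_z)$ within distance $1/2$ of $\mathcal Y$. It checks \eqref{eq:new-atom-feasible} on three regions, with no rescaling at all:
\begin{itemize}
\item near $\mathcal Y$;
\item on a compact set where $C\le1-\eta$;
\item on a far region where $C\le1/3$.
\end{itemize}
Since $\log a_z=s(z-b-1)^2$, your Step~3 comparison then goes through unchanged.
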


The paper is organized as follows.  In Section~\ref{sec:hypersimplex} we prove Theorem~\ref{thm:main}. In Section~\ref{sec:info} we describe the information-theoretic reformulation and prove Theorem~\ref{thm:rd-symmetric} and Corollary~\ref{cor:rd-lower}.  In Section~\ref{sec:fix} we recall Fix's Gaussian reformulation of the rate-distortion problem, describe the numerical three-point example, and prove Proposition~\ref{prop:no-finite-maximizer}.

\section{The hypersimplex construction}\label{sec:hypersimplex}
We use $\log$ to denote the natural logarithm, and we keep the convention $0\log0=0$. In this section we prove Theorem~\ref{thm:main}.  For a fixed $a\in(0,1/2)$, put $k=\lfloor an\rfloor$ and
\[
M_{n,a}:=\left\{x\in\{0,1\}^n:\ \sum_{j=1}^n x_j=k\right\}.
\]
This is the constant-weight layer of the Boolean cube with weight $k$.  For all sufficiently large~$n$ we have $k\le n/2$, and therefore two points of $M_{n,a}$ can have disjoint supports.  Hence
	\[\diam(M_{n,a})=\sqrt{2k}=\bigl(\sqrt{2a}+o(1)\bigr)\sqrt n.\]

The proof of Theorem~\ref{thm:main} is split into two parts.  First we estimate the largest possible intersection of $M_{n,a}$ with a single ball of radius $\diam(M_{n,a})/2$. This is an entropy maximization problem. Then we optimize the resulting exponent over $a$.

We shall use the following standard consequence of Stirling's formula.
\begin{lemma}\label{lem:stirling}
For fixed $A>0$ and $0\le B\le A$,
	\[\log\binom{\lfloor An\rfloor}{\lfloor Bn\rfloor}
	=An\bigl(H(B/A)+o(1)\bigr),\qquad n\to\infty.\]
\end{lemma}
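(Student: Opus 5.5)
The plan is to derive the lemma from Stirling's formula in the crude form $\log m! = m\log m - m + O(\log m)$ for $m\ge 1$, with $\log 0! = 0$. Put $N=\lfloor An\rfloor$ and $K=\lfloor Bn\rfloor$. Since $B\le A$, we have $0\le K\le N$, so the binomial coefficient is well defined. We also have $N = An + O(1)$ and $K = Bn+O(1)$.

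The main step handles the generic case $0<B<A$. There $K$ and $N-K$ both tend to infinity linearly in $n$. Stirling gives
\[
\log\binom{N}{K} = N\log N - K\log K - (N-K)\log(N-K) + O(\log n).
\]
The linear terms $-N+K+(N-K)$ cancel. Writing $p=K/N$, the right-hand side becomes $N H(p) + O(\log n)$. Now $p\to B/A$, since $K/N=(Bn+O(1))/(An+O(1))$. Because $H$ is continuous on $[0,1]$, we get $H(p)=H(B/A)+o(1)$. Using $N=An+O(1)$ and $H\le\log 2$, we conclude
\[
\log\binom{N}{K} = An\bigl(H(B/A)+o(1)\bigr).
\]

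Next come the endpoint cases, which I expect to be the only delicate point, since Stirling cannot be applied to factorials that stay bounded.

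\emph{Case $B=0$.} Here $K=0$, so $\log\binom{N}{0}=0 = An\,H(0)$.

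\emph{Case $B=A$.} Here $N-K\in\{0,1\}$ for large $n$, because $\lfloor An\rfloor-\lfloor Bn\rfloor$ is then $0$. So $\log\binom{N}{K}=O(\log n)=o(n)$, and this matches $An\,H(1)=0$.

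A cleaner uniform alternative avoids the case split entirely. Use the standard two-sided bound
\[
\frac{1}{N+1}e^{NH(K/N)}\le\binom{N}{K}\le e^{NH(K/N)},
\]
valid for all $0\le K\le N$. This follows from the binomial theorem applied to $(p+(1-p))^N$ with $p=K/N$, together with the fact that the largest term of that expansion is the $K$-th one. It gives $\log\binom{N}{K}=NH(K/N)+O(\log n)$ in every case, and continuity of $H$ at $B/A$ then finishes the argument as above. I would present this version.
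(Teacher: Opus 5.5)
Your proposal is correct and matches the paper, which gives no proof beyond calling the lemma a standard consequence of Stirling's formula; your computation reducing $\log\binom{N}{K}$ to $NH(K/N)+O(\log n)$, followed by continuity of $H$ at $B/A$ and the trivial endpoint cases $K=0$ and $K=N$, is exactly that argument. The version you say you would present, based on $\frac{1}{N+1}e^{NH(K/N)}\le\binom{N}{K}\le e^{NH(K/N)}$ (from the binomial expansion, whose largest term is at $j=K$), is an equally valid Stirling-free variant that handles the endpoints uniformly; it only needs $N\ge1$ so that $K/N$ is defined, which holds for large $n$ since $A>0$.
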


\subsection{The covering ratio}
For $b>0$, define
\[
F_n(a,b):=\sup_{y\in\R^n}\bigl|M_{n,a}\cap B(y,b\sqrt n)\bigr|.
\]
We shall use
\[
F(n,a):=F_n\left(a,\sqrt{\frac{k}{2n}}\right)
=\sup_{y\in\R^n}\bigl|M_{n,a}\cap B(y,\diam(M_{n,a})/2)\bigr|.
\]
Let
\[
N(n,a):=N_{\rm cov}\bigl(M_{n,a},\diam(M_{n,a})/2\bigr).
\]
Since each ball of radius $\diam(M_{n,a})/2$ contains at most $F(n,a)$ points of $M_{n,a}$,
\begin{equation}\label{eq:Rna_def}
N(n,a)\ \ge\ \frac{|M_{n,a}|}{F(n,a)}=:R_n(a).
\end{equation}

Let $a^\ast=0.212856445\ldots$ be the unique solution in $(0,1/2)$ of
\begin{equation}\label{eq:astar_equation}
2\sqrt{1-2a}\,\log\frac{1-a}{a}=\log\frac{1+\sqrt{1-2a}}{1-\sqrt{1-2a}}.
\end{equation}
Theorem~\ref{thm:main} follows by applying the estimates below to $M_{n,a^\ast}$.

\subsection{The entropy maximization for one ball}
For $m\in\N$, denote by $D_{n,m}$ the set of vectors in $\R^n$ having at most $m$ distinct coordinates, and set
\[
F_n^{(m)}(a,b):=\sup_{y\in D_{n,m}}\bigl|M_{n,a}\cap B(y,b\sqrt n)\bigr|.
\]
We shall use the following elementary quantization to reduce arbitrary centers to finite-level centers: for every $a\in(0,1)$, $b>0$, $n\in\N$, and $m\ge1$,
\begin{equation}\label{eq:finite_level_reduction}
F_n(a,b)\le F_n^{(m+1)}\left(a,b+\frac1m\right).
\end{equation}
Indeed, first project the center $y$ onto the cube $[0,1]^n$, obtaining $y'$; this can only decrease its distance to every point of $\{0,1\}^n$.  Then round each coordinate of $y'$ to the nearest point of the grid $\{0,1/m,\ldots,1\}$.  The resulting vector $z$ has at most $m+1$ distinct coordinates and satisfies $\|y'-z\|_2\le \sqrt n/m$.  Hence
\[
B(y,b\sqrt n)\cap\{0,1\}^n\subset B\left(z,\left(b+\frac1m\right)\sqrt n\right)\cap\{0,1\}^n,
\]
which proves \eqref{eq:finite_level_reduction}.

For $m\in\N$, define
	\[\Lambda_m(a,b):=
	\sup\left\{\sum_{i=1}^m p_iH(r_i):\ \sum_{i=1}^m p_i=1,\ \sum_{i=1}^m p_ir_i=a,\ \sum_{i=1}^m p_ir_i(1-r_i)\le b^2\right\},\]
where $p_i\ge0$ and $r_i\in[0,1]$.  Also put
	\[\Lambda(a,b):=
	\sup\left\{\int_0^1 H(r)\,d\mu(r):\ \int_0^1 r\,d\mu(r)=a,\ \int_0^1 r(1-r)\,d\mu(r)\le b^2\right\},\]
where the supremum is over all probability measures $\mu$ on $[0,1]$.

We will use the variational formula
\begin{equation}\label{eq:variational_formula}
\lim_{n\to\infty}\frac1n\log F_n(a,b)=\Lambda(a,b),
\end{equation}
valid for every fixed $a\in(0,1)$ and $b>0$.  To prove it, first fix $m$.  If $y\in D_{n,m}$, partition $\{1,\ldots,n\}$ into blocks on which $y$ is constant, say equal to $c_i$ on a block of size $n_i$.  A point of $M_{n,a}$ has some numbers $\ell_i$ of ones in these blocks, with $\sum_i\ell_i=k$.  For this type the number of points is $\prod_i\binom{n_i}{\ell_i}$, and its squared distance from $y$ is
\[
\sum_i\left(\ell_i(1-c_i)^2+(n_i-\ell_i)c_i^2\right).
\]
For fixed densities $p_i=n_i/n$ and $r_i=\ell_i/n_i$, the best choice of $c_i$ is $c_i=r_i$, and the corresponding normalized squared distance is $\sum_i p_ir_i(1-r_i)$.  Since the number of possible types is polynomial in $n$ for fixed $m$, Lemma~\ref{lem:stirling} gives
\[
\lim_{n\to\infty}\frac1n\log F_n^{(m)}(a,b)=\Lambda_m(a,b).
\]
The floor in $k=\lfloor an\rfloor$ changes only the $o(n)$ term.

The upper bound for arbitrary centers follows from \eqref{eq:finite_level_reduction}: for every $m$,
\[
\limsup_{n\to\infty}\frac1n\log F_n(a,b)
\le \Lambda_{m+1}\left(a,b+\frac1m\right).
\]
Letting $m\to\infty$ gives at most $\Lambda(a,b)$.  Indeed, any almost extremizing sequence of finitely supported measures for the right-hand side has a weakly convergent subsequence, and the limit satisfies the constraints defining $\Lambda(a,b)$.

For the lower bound, choose a finitely supported probability measure $\mu=\sum_i p_i\delta_{r_i}$ with mean $a$, with $\int r(1-r)\,d\mu(r)<b^2$, and with $\int H(r)\,d\mu(r)>\Lambda(a,b)-\eps$.  Such a strictly feasible measure is obtained by mixing an almost optimizer with the Bernoulli measure $(1-a)\delta_0+a\delta_1$ by an arbitrarily small amount.  After a rational approximation of the $p_i$ and $r_i$, choose blocks of sizes $n_i=p_in+o(n)$ and choose $\ell_i=r_in_i+o(n)$ ones in block $i$, with $\sum_i\ell_i=k$.  Taking the center to be $r_i$ on block $i$, all points of this type lie in $B(y,b\sqrt n)$ for all large $n$, and their number has exponent $\sum_i p_iH(r_i)+o(1)$.  Letting $\eps\downarrow0$ proves \eqref{eq:variational_formula}.

\begin{lemma}\label{prop:explicit_f}
For every $a\in(0,1/2)$,
\[
\lim_{n\to\infty}\frac1n\log F(n,a)
=H\!\left(u(a)\right),
\qquad
u(a):=\frac{1-\sqrt{1-2a}}{2}.
\]
\end{lemma}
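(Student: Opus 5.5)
By the variational formula \eqref{eq:variational_formula} with $b^2=a/2$, the statement reduces to a finite-dimensional optimization: we must show that $\Lambda\bigl(a,\sqrt{a/2}\bigr)=H(u(a))$. The radius is $\diam(M_{n,a})/2=\sqrt{k/2}$, so $b=\sqrt{k/(2n)}\to\sqrt{a/2}$. The small discrepancy between $b$ and its limit is absorbed by continuity of $\Lambda$ in $b$: $\Lambda(a,\cdot)$ is concave and nondecreasing in $b^2$ on the region where it is finite, hence continuous at interior points. Alternatively, the quantization argument above already allows $b+1/m$, and $b$ can be perturbed slightly in the same way.

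\emph{Lower bound.} Take $\mu=(1-p)\delta_0+p\delta_r$ with $pr=a$, and choose $r$ so that the variance constraint is tight:
\[
pr(1-r)=a(1-r)=a/2
\]
gives $r=1/2$ and $p=2a$, with value $2a\log 2$. This is probably not optimal. A better candidate is a single atom at $u$ together with an atom at $0$. The expected optimizer is the two-point measure $\mu=(1-p)\delta_{0}+p\delta_{u}$ or similar; more naturally, perhaps $\mu=\delta$ mixtures on $\{u,1-u\}$. Indeed, with atoms $u$ and $1-u$ we get $r(1-r)=u(1-u)$ for both. The constraint then reads $u(1-u)\le a/2$, and $u(a)$ is exactly the root of $u(1-u)=a/2$. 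The mean condition $a=\lambda u+(1-\lambda)(1-u)$ requires $u\le a\le 1-u$, and $u\le a$ holds since $u(1-u)=a/2$ gives $u\le a$ when $u\le 1/2$. The value is $H(u)$. So the lower bound $\Lambda\ge H(u(a))$ is exhibited by this measure.

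\emph{Upper bound.} For any feasible $\mu$, set $t=r(1-r)\in[0,1/4]$ and write $H(r)=\phi(t)$, where $\phi(t)=H\bigl(\tfrac{1-\sqrt{1-4t}}{2}\bigr)$; this is legitimate because $H$ is symmetric about $1/2$. The key claim is that $\phi$ is concave and increasing on $[0,1/4]$. Given that, Jensen's inequality yields
\[
\int H\,d\mu=\int\phi(t)\,d\mu\le\phi\Bigl(\int t\,d\mu\Bigr)\le\phi(a/2)=H(u(a)).
\]
The mean constraint is not even needed here.

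Monotonicity of $\phi$ is clear, since both $H$ on $[0,1/2]$ and $t\mapsto r$ are increasing. Concavity is the main computation. With $s=\sqrt{1-4t}$ and $r=(1-s)/2$, we have $dr/dt=1/s$, so
\[
\phi'(t)=\frac{H'(r)}{1-2r}=\frac{\log\frac{1+s}{1-s}}{s},
\]
and we need this to be decreasing in $t$, i.e.\ increasing in $s\in(0,1]$. Since $\frac1s\log\frac{1+s}{1-s}=2\sum_{j\ge0}\frac{s^{2j}}{2j+1}$ has nonnegative coefficients, it is increasing in $s$. Hence $\phi'$ is decreasing in $t$ and $\phi$ is concave.

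This concavity verification is the only real obstacle, and the series expansion handles it cleanly.
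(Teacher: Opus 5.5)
Your proposal is correct and follows the paper's proof essentially step for step. You reduce to $\Lambda(a,\sqrt{a/2})$ via the variational formula and continuity in $b$, bound it above by Jensen's inequality for the increasing concave function $G(s)=H\bigl(\frac{1-\sqrt{1-4s}}{2}\bigr)$, and attain the bound with the two-atom measure on $\{u,1-u\}$. The only differences are cosmetic: you check concavity through the positive-coefficient series $\frac1s\log\frac{1+s}{1-s}=2\sum_{j\ge0}\frac{s^{2j}}{2j+1}$ rather than computing $G''$, and you justify continuity in $b$ by concavity of $\Lambda(a,\cdot)$ in $b^2$ (mixing feasible measures), which is a clean way to make precise the paper's appeal to continuity of the compact variational problem.
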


\begin{proof}
Since $\sqrt{k/(2n)}\to\sqrt{a/2}$, monotonicity in the radius, \eqref{eq:variational_formula}, and continuity of the compact variational problem in~$b$ reduce the claim to computing $\Lambda(a,\sqrt{a/2})$.  Let $\mu$ be feasible in the definition of this quantity.  Put $s(r)=r(1-r)$ and
\[
G(s):=H\!\left(\frac{1-\sqrt{1-4s}}{2}\right),\qquad 0\le s\le \frac14.
\]
Since $H(r)=H(1-r)$, we have $H(r)=G(s(r))$ for all $r\in[0,1]$.  Writing $t=\sqrt{1-4s}$, direct differentiation gives
\[
G'(s)=\frac1t\log\frac{1+t}{1-t}>0
\]
and
\[
G''(s)=\frac{(1-t^2)\log\frac{1+t}{1-t}-2t}{2s\,t^3}\le0.
\]
The last inequality follows from
\[
\log\frac{1+t}{1-t}=2\int_0^t\frac{dx}{1-x^2}\le \frac{2t}{1-t^2},\qquad 0<t<1.
\]
Thus $G$ is increasing and concave.  Jensen's inequality yields
\[
\int H(r)\,d\mu(r)=\int G(s(r))\,d\mu(r)
\le G\left(\int s(r)\,d\mu(r)\right)
\le G(a/2).
\]
Let $u=u(a)$ be the smaller root of $u(1-u)=a/2$; then $G(a/2)=H(u)$, so this gives the upper bound.

For the reverse inequality, choose $p\in[0,1]$ such that $pu+(1-p)(1-u)=a$, and set
\[
\mu=p\delta_u+(1-p)\delta_{1-u}.
\]
Then $\int r\,d\mu(r)=a$, while $r(1-r)=a/2$ on the support of $\mu$.  Moreover $H(u)=H(1-u)$, so $\int H\,d\mu=H(u)$.  Hence $\Lambda(a,\sqrt{a/2})=H(u)$.
\end{proof}

\begin{proof}[Proof of Theorem~\ref{thm:main}]
By Lemma~\ref{lem:stirling} and Lemma~\ref{prop:explicit_f},
\[
\frac1n\log R_n(a)
=H(a)-H\!\left(u(a)\right)+o(1),
\qquad n\to\infty.
\]
Set $\Phi(a):=H(a)-H(u(a))$ and $t=\sqrt{1-2a}$.  Since $u(a)=(1-t)/2$,
\[
\Phi'(a)=\log\frac{1-a}{a}-\frac{1}{2t}\log\frac{1+t}{1-t}.
\]
Thus the critical point equation is precisely \eqref{eq:astar_equation}.  Moreover,
\[
\Phi'(a)\to+\infty\quad(a\downarrow0),\qquad \Phi'(a)\to-1\quad(a\uparrow1/2),
\]
and
\[
\Phi''(a)=\frac{1-3t^2}{t^2(1-t^4)}-\frac{1}{2t^3}\log\frac{1+t}{1-t}<0,
\]
for $0<t<1$: if $t^2\ge1/3$ this is immediate, while if $t^2<1/3$, then
\[
\log\frac{1+t}{1-t}>2t>\frac{2t(1-3t^2)}{1-t^4}.
\]
Hence $\Phi$ is strictly concave and has a unique maximizer $a^\ast\in(0,1/2)$.  Numerically,
\[
a^\ast=0.212856445\ldots,\qquad
\Phi(a^\ast)=0.148622964\ldots,
\]
and therefore
\[
\exp(\Phi(a^\ast))=1.160235457\ldots .
\]
The lower bound for $N(n,a^\ast)$ follows from \eqref{eq:Rna_def}.  Since $M_{n,a^\ast}$ is $S_n$-invariant, the definition of $g_n^{\rm sym}$ gives
\[
 g_n^{\rm sym}\ge \exp\left(n\bigl(\Phi(a^\ast)+o(1)\bigr)\right).
\]
This proves Theorem~\ref{thm:main}, with $\alpha_{\rm bin}=\exp(\Phi(a^\ast))$.
\end{proof}

\section{An information-theoretic reformulation}\label{sec:info}

\subsection{Setup}
We now pass from the hypersimplex to an $S_n$-orbit built from an arbitrary finite set of real values. The information-theoretic notation used below is a convenient way to express elementary counting facts.  We describe these facts first.

Let
	\[\mathcal X=\{x_1,\ldots,x_m\},\]
and fix positive numbers $p_1,\ldots,p_m$ with $\sum_i p_i=1$.  Let $X$ be the random variable given by
	\[\mathbb P(X=x_i)=p_i.\]
For every $n$ we choose integers $n_i=n_i(n)$ such that
	\[\sum_i n_i=n,
	\qquad
	p_i^{(n)}:=\frac{n_i}{n}\longrightarrow p_i.\]
(For definiteness we could choose $n_i$ satisfying $n_1+\dots+n_j=\lfloor n(p_1+\dots+p_j)\rfloor$.)
We write $X^{(n)}$ for the random variable with probabilities $p_i^{(n)}$. Consider the set
	\begin{equation}\label{eq:permutation-orbit}
	T_n(X):=\left\{z\in\mathcal X^n:\ \#\{j:\ z_j=x_i\}=n_i\text{ for every }i\right\}\subset\R^n.
	\end{equation}
Thus $T_n(X)$ is an $S_n$-orbit of a vector with $n_i$ copies of $x_i$, $i=1,\dots,m$.  The random variables $X$ and $X^{(n)}$ are simply compact notation for the limiting and finite coordinate proportions.

For a finitely supported random variable $Z$, with probabilities $q_1,\ldots,q_s$, we define by 
	\[H(Z)=-\sum_{i=1}^s q_i\log q_i\]
its entropy. For us its relevance is purely combinatorial: by Stirling's formula, the number of sequences of length $n$ in which the $i$-th symbol occurs $nq_i$ times is
	\[\binom{n}{nq_1,\dots,nq_s} = \exp\bigl(nH(Z)+o(n)\bigr).\]
In particular,
	\[\log |T_n(X)|=nH(X^{(n)})+o(n),
	\qquad
	|T_n(X)|=\exp\bigl(nH(X)+o(n)\bigr).\]

Now fix a point $z\in T_n(X)$ and a possible center $y\in\R^n$, and suppose that the center $y$ also uses a fixed finite (and independent of $n$) set of coordinates. Let $J$ be a uniformly distributed random variable with values in $\{1,\ldots,n\}$ and set
	\[Z=z_J, \qquad Y=y_J.\]
Then the joint distribution of $(Z,Y)$ records the frequencies of the coordinate pairs $(z_j,y_j)$, and
	\[\frac1n\|z-y\|_2^2=\mathbb E|Z-Y|^2.\]
Moreover, for a given $y$, we can calculate the number of vectors $z\in T_n(X)$ with a given joint distribution $(Z,Y)$. If we write $a_1,\ldots,a_r$ for the distinct coordinate values of $y$, and put
	\[N_{ij}:=n\mathbb P(Z=x_i,Y=a_j)\,,\]
then for each $j$, the number of choices of coordinates $z_k$ for which $y_k=a_j$ is given by the multinomial coefficient $\binom{N_{1j}+\dots+N_{mj}}{N_{1j},\ldots,N_{mj}}$. Therefore the total number of compatible $z$ is
	\[\prod_{j=1}^r
	\binom{N_{1j}+\dots+N_{mj}}{N_{1j},\ldots,N_{mj}}=\exp\bigl(nH(Z\mid Y)+o(n)\bigr),\]
where
	\[H(Z\mid Y)=\sum_y\mathbb P(Y=y)H(Z\mid Y=y)\]
is the conditional entropy. 

Combining the above counts, suggests that the number of $y$'s needed to cover $T_n(X)$ by sets of $z$'s with fixed joint distribution $(Z,Y)$ is
	\[\exp\big(nI(Z;Y)+o(n)\big)\,,\]
where
	\[I(Z;Y):=H(Z)-H(Z\mid Y)\]
is the so-called mutual information.

A ball of squared radius $n\Delta$ permits only pair frequencies satisfying $\mathbb E|X-Y|^2\le\Delta$. To make the intersection with a single ball as large as possible, one maximizes $H(X\mid Y)$ under this constraint, or equivalently minimizes $I(X;Y)$.  This leads to
	\[R_X(\Delta):=\inf_{Y:\ \mathbb E|X-Y|^2\le\Delta}I(X;Y),\]
where the infimum is over all joint distributions of $(X,Y)$ with the prescribed distribution of $X$ and with $Y$ finitely supported and real-valued.  As we will see below, $R_X(\Delta)$ is the asymptotic exponential rate of the number of balls of radius $\sqrt{n\Delta}$ needed to cover $T_n(X)$.

The diameter of $T_n(X)$ also has a probabilistic description.  For two points of $T_n(X)$, a uniformly chosen coordinate gives a pair $(X_1,X_2)$ whose two marginal distributions are both $X^{(n)}$; conversely, every $\frac1n\Z_{\ge0}$-valued table of pair frequencies with these marginals is realized by two points of the orbit. Hence, if
	\[D(X):=\sup_{X_1,X_2}\mathbb E|X_1-X_2|^2,\]
where the supremum is over all joint couplings $(X_1,X_2)$ with $X_i\sim X$, then
	\[\frac1n\diam(T_n(X))^2\longrightarrow D(X).\]
A ball with half the diameter therefore has normalized squared radius $nD(X)/4$.  Accordingly,
	\[\alpha(X)=R_X(D(X)/4),
	\qquad
	\alpha_0=\sup_X\alpha(X).\]

\subsection{Detailed proof sketch of Theorem~\ref{thm:rd-symmetric}}

We explain the two directions separately.

\noindent\emph{The lower bound.}
Fix a finitely supported non-constant $X$, form $T_n=T_n(X)$ as in~\eqref{eq:permutation-orbit}, and put
\[
\delta_n:=\frac1n\diam(T_n)^2.
\]
The calculations in the setup give
\[
|T_n|=\exp\bigl(nH(X)+o(n)\bigr),
\qquad
\delta_n\longrightarrow D(X).
\]
It remains to control an arbitrary ball center, which need not use a fixed number of coordinate values.  Let $S=T_n\cap B(y,\diam(T_n)/2)$, choose $U=(U_1,\ldots,U_n)$ uniformly from $S$, and independently choose $J$ uniformly from $\{1,\ldots,n\}$.  Every point of $S$ has the same coordinate multiplicities, so $U_J$ has distribution $X^{(n)}$.  Moreover,
\[
\mathbb E|U_J-y_J|^2
=\frac1{|S|n}\sum_{u\in S}\|u-y\|_2^2
\le \frac{\delta_n}{4}.
\]
Since $y_J$ is determined by $J$, knowing $J$ can only leave less uncertainty about $U_J$ than knowing $y_J$ alone.  Therefore
\begin{align*}
	\log|S|=H(U)
	&\le\sum_{j=1}^n H(U_j)
	=nH(U_J\mid J)\\
	&\le nH(U_J\mid y_J)\\
	&\le n\bigl(H(X^{(n)})-R_{X^{(n)}}(\delta_n/4)\bigr).
\end{align*}
Here the last inequality is precisely the definition of $R_{X^{(n)}}$.  Comparing this bound with the size of $T_n$ gives
\[
N_{\rm cov}(T_n,\diam(T_n)/2)
\ge \exp\left(nR_{X^{(n)}}(\delta_n/4)+o(n)\right)
=\exp\left(n(\alpha(X)+o(1))\right).
\]
The last equality is just continuity of a minimization over a compact finite-dimensional set: the probabilities $p_i^{(n)}$ tend to $p_i$, the distortion $\delta_n/4$ tends to $D(X)/4$, and one may take the second variable to have at most $m+1$ possible values, all in a fixed compact interval.  Taking the supremum over $X$ gives the required lower bound by $\alpha_0$.

\smallskip
\noindent\emph{The upper bound.}
Recall that $S_n$ is the symmetric group, acting on $\R^n$ by coordinate permutations. Let $A\subset\R^n$ be finite, $S_n$-invariant, and normalized so that $\diam A=1$.  We first reduce $A$ to a small collection of permutation orbits. Translating in the direction $(1,\dots,1)$, we may assume that the barycenter of $A$ is at the origin. Since $\diam A=1$, we may henceforth assume that $A\subset B(0,1)$.

Fix a small $\eps>0$.  For every $x\in A$, round coordinates of absolute value at most $\eps/(2\sqrt n)$ to zero, and round each remaining absolute value down to the nearest number of the form
\[
\frac{\eps}{2\sqrt n}(1+\eps/2)^k,
\]
while keeping its sign.  This changes the vector by at most $\eps$ in Euclidean norm and leaves only $O_\eps(\log n)$ possible coordinate values. Hence the number of rounded orbits is at most
\[
(n+1)^{O_\eps(\log n)}=\exp(o(n)).
\]
Choose a set $\mathcal Q$ containing one representative from each rounded orbit. Then $|\mathcal Q|=\exp(o(n))$, and $A$ lies within distance $\eps$ of
\[
\bigcup_{q\in\mathcal Q}S_nq.
\]

It remains to cover one orbit.  Let $X_q$ record its coordinate frequencies after scaling by~$\sqrt n$:
\[
\mathbb P(X_q=\sqrt n\,r)
=\frac1n\#\{i:q_i=r\}.
\]
Since $S_nq$ lies within distance $\eps$ of $A$, we have $\diam(S_nq)\le1+2\eps$. The definition of $X_q$ gives
\[
D(X_q)=\diam(S_nq)^2\le(1+2\eps)^2.
\]

For any $\rho<1/2$, choose a joint distribution of $(X_q,Y)$ that nearly realizes $R_{X_q}(\rho^2)$ and round its probabilities to integer coordinate counts. Let $d_q$ be the number of points of $S_nq$ having the prescribed joint frequencies with a fixed center.  The conditional-entropy count from the setup gives
	\[d_q=\exp\bigl(nH(X_q\mid Y)+o(n)\bigr).\]
By a standard random covering argument (choose centers uniformly and use biregularity to see that each point is covered with probability $d_q/|S_nq|$),
\[
N_{\rm cov}(S_nq,\rho+o(1))
\le \bigl(1+\log|S_nq|\bigr)\frac{|S_nq|}{d_q}
=\exp\bigl(nR_{X_q}(\rho^2)+o(n)\bigr).
\]
This estimate is uniform over $q\in\mathcal Q$: the $O_\eps(\log n)$ possible coordinate values contribute only $o(n)$ to the multinomial estimates, while the bound $\mathbb E X_q^2=\|q\|_2^2\le1$ allows the few very large values to be kept unchanged.

Now take $\rho=1/2-2\eps$ and put $\Delta_0=D(X_q)/4$.  By definition,
\[
R_{X_q}(\Delta_0)=\alpha(X_q)\le\alpha_0.
\]
If $\Delta_0\le\rho^2$, monotonicity gives $R_{X_q}(\rho^2)\le\alpha_0$.  Otherwise, take a nearly optimal pair $(X_q,Y)$ at distortion $\Delta_0$, let $B$ be an independent Bernoulli random variable with $\mathbb P(B=1)=\lambda$, and define $Y'=X_q$ when $B=1$ and $Y'=Y$ when $B=0$.  Setting $\lambda=1-\rho^2/\Delta_0$, we have
\[
\mathbb E|X_q-Y'|^2\le(1-\lambda)\Delta_0=\rho^2.
\]
By the data-processing inequality~\cite{CT}*{Section~2.8},
\[
I(X_q;Y')\le I(X_q;B,Y')
=(1-\lambda)I(X_q;Y)+\lambda H(X_q).
\]
The bound $D(X_q)\le(1+2\eps)^2$ shows that $\lambda=O(\eps)$. Using the fact that the support of $X_q$ forms a geometric progression, together with $\mathbb E X_q^2\le1$, one can show that $H(X_q)=O(\log(1/\eps))$ holds uniformly in $n$. Therefore,
\[
R_{X_q}(\rho^2)\le\alpha_0+O(\eps\log(1/\eps)).
\]
After enlarging the covering balls by the initial approximation error $\eps$, their radius is at most~$1/2$.  Since $|\mathcal Q|=\exp(o(n))$, the whole set $A$ can therefore be covered by
\[
\exp\bigl(n(\alpha_0+O(\eps\log(1/\eps))+o(1))\bigr)
\]
balls of radius $1/2$, where $o(1)\to0$ as $n\to\infty$ for fixed $\eps$.  First let $n\to\infty$ and then $\eps\downarrow0$.  This proves the upper bound and completes the proof. \qed

\begin{proof}[Proof of Corollary~\ref{cor:rd-lower}]
	This follows at once from Theorem~\ref{thm:rd-symmetric}, since $g_n\ge g_n^{\rm sym}$.
\end{proof}

When $X$ is supported on $\{0,1\}$ with $\mathbb P(X=1)=a\le1/2$, the sets $T_n(X)$ are the layers $M_{n,a}$ with $k/n\to a$.  In this case $D(X)=2a$, and Lemma~\ref{prop:explicit_f} gives
	\[\alpha(X)=H(a)-H\left(\frac{1-\sqrt{1-2a}}{2}\right),\]
so the two-point case of the rate-distortion formulation recovers Theorem~\ref{thm:main}.

\section{A three-point distribution and a Gaussian reformulation}\label{sec:fix}
We recall Fix's characterization~\cite{Fix} of the optimizer in the squared-error rate-distortion problem from Section~\ref{sec:info}.  Fix's statement concerns a prescribed distortion level; throughout this section we denote this level by $\Delta$.

Let $\mathcal X$ be the finite support of $X$, and let $P(x)=\mathbb P(X=x)$.  In the non-trivial range $0<\Delta<\operatorname{Var}(X)$, there exists a number $s>0$ and a strictly positive function $\alpha\colon\mathcal X\to (0,\infty)$ such that
\[
C(y)=\sum_{x\in\mathcal X}\alpha(x)P(x)e^{-s(x-y)^2}
\]
satisfies $C(y)\le1$ for all $y\in\R$.  An optimal reconstruction distribution $Q$ is supported on the finite contact set $\mathcal Y=C^{-1}(1)$ and satisfies
\[
\frac1{\alpha(x)}=\sum_{y\in\mathcal Y}Q(y)e^{-s(x-y)^2}.
\]
The corresponding optimal channel is
\[
\mathbb P(Y=y\mid X=x)=\alpha(x)Q(y)e^{-s(x-y)^2},
\]
and its rate is
\[
\sum_{x\in\mathcal X}P(x)\log\alpha(x)-s\Delta.
\]
The distortion may also be recovered from
\[
\sum_yQ(y)C''(y)
=\sum_{x,y}\alpha(x)P(x)Q(y)e^{-s(x-y)^2}
  \bigl(4s^2(x-y)^2-2s\bigr)
=2s\bigl(2s\mathbb E|X-Y|^2-1\bigr).
\]

We shall also use the dual half of this characterization.  If $s>0$ and $a\colon\mathcal X\to(0,\infty)$ satisfy
\[
\sum_{x\in\mathcal X}P(x)a(x)e^{-s(x-y)^2}\le1
\qquad\text{for every }y\in\R,
\]
then
\begin{equation}\label{eq:rd-dual-lower}
R_X(\Delta)\ge \sum_{x\in\mathcal X}P(x)\log a(x)-s\Delta.
\end{equation}
Indeed, this is the log-sum inequality applied to the joint law of $(X,Y)$ and the sub-probability measure
\[
P(x)P_Y(y)a(x)e^{-s(x-y)^2}.
\]
Equality holds for the optimal data above.

The following three-point numerical candidate leads to a small improvement over the exponent from \cref{thm:main}:

\begin{align*}
    X &= \begin{bmatrix}
        0 \\
        2.1361583129963432932551182078530101126630888598883 \\
        4.8391028430568877914799153960214290951290015744938
    \end{bmatrix} \\[1ex]
    P &= \begin{bmatrix}
        0.7854464210172769953753097061250384632886118945593 \\
        0.2134421263533836981569289942024470544115451329875 \\
        0.0011114526293393064677612996725144822998429724532
    \end{bmatrix} \\[1ex]
    \alpha &= \begin{bmatrix}
        1.1604978313513987171135503645129178447210657924979 \\
        4.2305522982531473104212020077852105800097089594071 \\
        885.93383293483472942565598323570751225946650948497
    \end{bmatrix}
\end{align*}
\begin{align*}
    Y &= \begin{bmatrix}
        0.2651059566293051272482657065985716878727329953196 \\
        1.9114017952529115119565287436748171426551645601336 \\
        4.7946984041506117301899432909379702105894338153745
    \end{bmatrix} \\[1ex]
    Q &= \begin{bmatrix}
        0.8811670582579179994364471379686748123678760530378 \\
        0.1186302321008139002145964133077049698542681435763 \\
        0.0002027096412681003489564487236202177778558033857
    \end{bmatrix}\\[1ex]
    s &= 0.5669229113540285972530639842885832122672672583115
\end{align*}

The displayed $Y$ and $Q$ numerically satisfy the optimality equations to more than $45$ decimal places, to the displayed accuracy $D(X)=2$ and $\E|X-Y|^2=1/2=D(X)/4$. We have
	\[\exp(\alpha(X))>1.160497831351.\]
It is not hard to obtain a certified bound, but we decided against doing this here, since, as we show below, a three-point distribution cannot be optimal.

\subsection*{Proof of Proposition~\ref{prop:no-finite-maximizer}}
\begin{proof}
It is enough to consider a non-constant finitely supported random variable $X$.  Put $D=D(X)$ and $\Delta=D/4$.  If $\alpha(X)=0$, choose any two-point random variable $X'$ for which $\alpha(X')>0$, as described at the end of Section~\ref{sec:info}.  We may therefore assume that $\alpha(X)>0$, and hence that we are in the non-trivial case $0<\Delta<\mathrm{Var}(X)$. Let $P$ be the probability law of $X$, and let $s>0$, $\alpha$, and
\[
C(y)=\sum_x P(x)\alpha(x)e^{-s(x-y)^2}
\]
be the optimal Gaussian data for $R_X(\Delta)$.  Thus $C(y)\le1$, the contact set $\mathcal Y=C^{-1}(1)$ is finite and non-empty, and
\[
\alpha(X)=R_X(D/4)=\sum_x P(x)\log\alpha(x)-\frac{sD}{4}.
\]
Set
\[
A:=\sum_x P(x)\log\alpha(x),
\qquad
b:=\max\mathcal Y,
\]
and choose $B$ so large that $\operatorname{supp}X\subset[-B,B]$ and $\mathcal Y\subset[-B,B]$.

Let $z>B+2$ and define
	\[a_z:=e^{s(z-b-1)^2},
	\qquad
	\varepsilon_z:=\frac{1}{2a_z}.\]
Let $X_z$ have the probability law
	\[P_z=(1-\varepsilon_z)P+\varepsilon_z\delta_z.\]
We claim that, for all sufficiently large $z$, the same value of $s$, together with the weights $\alpha(x)$ on the old support and the weight $a_z$ at the new atom $z$, is feasible for the dual inequality~\eqref{eq:rd-dual-lower} for $X_z$.  In other words,
\begin{equation}\label{eq:new-atom-feasible}
(1-\varepsilon_z)C(y)+\varepsilon_z a_z e^{-s(z-y)^2}\le1
\qquad\text{for all }y\in\R.
\end{equation}
Indeed, choose a neighborhood of $\mathcal Y$, of radius less than $1/2$ and so small that on it $C\ge1/2$.  On this neighbourhood the new term in~\eqref{eq:new-atom-feasible} is $o(\varepsilon_z)$ uniformly as $z\to\infty$, whereas
\[
1-(1-\varepsilon_z)C(y)=1-C(y)+\varepsilon_z C(y)\ge \frac{\varepsilon_z}{2}.
\]
On a fixed compact set away from the chosen neighbourhood, $C\le1-\eta$ for some $\eta>0$, while the new term tends to zero uniformly.  Finally, outside a sufficiently large compact set we have $C\le1/3$, and the new term is at most $\varepsilon_z a_z=1/2$.  This proves~\eqref{eq:new-atom-feasible}.

Applying~\eqref{eq:rd-dual-lower} to $X_z$ with distortion $D(X_z)/4$ gives
\begin{equation}\label{eq:alpha-z-lower}
\alpha(X_z)
\ge
(1-\varepsilon_z)A+\varepsilon_z\log a_z-\frac{s}{4}D(X_z).
\end{equation}
We also need a simple estimate on the new diameter parameter.  In any coupling of $X_z$ with itself, the total mass of pairs involving the new atom $z$ is at most $2\varepsilon_z$.  The remaining coupling can be completed, using only old atoms, to a coupling of $X$ with itself; hence its contribution is at most $D(X)$.  Therefore
\begin{equation}\label{eq:Dz-bound}
D(X_z)\le D(X)+2\varepsilon_z(z+B)^2.
\end{equation}
Combining~\eqref{eq:alpha-z-lower} and~\eqref{eq:Dz-bound}, we obtain
\[
\alpha(X_z)-\alpha(X)
\ge
\varepsilon_z\left(\log a_z-A-\frac{s}{2}(z+B)^2\right).
\]
Since $\log a_z=s(z-b-1)^2$, the expression in parentheses is positive for all sufficiently large $z$.  Hence $\alpha(X_z)>\alpha(X)$ for such $z$.

This proves that every non-constant finitely supported $X$ can be improved.  If a constant random variable were a maximizer, then $\alpha_0=0$.  This is impossible, since the two-point case described at the end of Section~\ref{sec:info} gives a finitely supported variable with $\alpha(X)=\log\alpha_{\rm bin}>0$.  Hence no finitely supported random variable can attain $\alpha_0$.
\end{proof}

\section*{AI use disclosure}

Generative AI was used as a writing and editing tool in the preparation of this manuscript.

\end{document}